\newtheorem{theorem}{Theorem}[section]
\newtheorem{lemma}[theorem]{Lemma}
\theoremstyle{definition}
\theoremstyle{remark}
\newtheorem{remark}[theorem]{Remark}
\numberwithin{equation}{section}
\begin{document}

\title{Rational Angle Sets and Tight T-Designs}
\newcommand{\chapter}[1]{}

\author{Benjamin Nasmith}

\date{\today}


\keywords{Tight t-designs, Jordan algebras}

\begin{abstract}
    Given a finite subset of a sphere or projective space, known as a design, we can compute the strength and angle set of that design. When the strength and angle set meet certain bounds, the design is called tight. Hoggar sought to prove that, aside from certain known cases, the angle sets of tight projective designs must be rational. Lyubich found a counter-example and provided a repair for Hoggar's proof but excluded the exceptional octonion projective cases. This note extends Lyubich's repair of Hoggar's proof to the remaining projective cases and extends the proof to all spherical cases. It does so by using Jordan algebra primitive idempotents to treat all of the cases simultaneously. We thereby confirm that tight spherical and projective designs have rational angle sets except in specific cases.
\end{abstract}

\maketitle

Combinatorial $t$-designs were generalized to spherical $t$-designs in \cite{delsarte_spherical_1977} and to projective spaces in \cite{neumaier_combinatorial_1981} (see also \cite{seidel_designs_1990}). 
Given a finite subset $X$ of a sphere or projective space we can evaluate both the angle set $A(X)$ and strength $t$ of that subset. 
For a given strength $t$ there exists an absolute lower bound on the cardinality $|X|$ such that $X$ is a $t$-design. Likewise, for a given cardinality $s$, there is an absolute upper bound on the cardinality $|X|$ such that $|A(X)| = s$. Furthermore, $t$ is bounded by $s$ according to the inequality $t \le 2 s - \varepsilon$ where $\varepsilon = |A \cap \{0\}|$. These three bounds are satisfied simultaneously if any one of them is met. When a set $X$ meets these absolute bounds, $X$ is called a \textit{tight t-design}. 

The full classification of tight $t$-designs is incomplete, but various theorems place upper bounds on the value of $t$ for different geometries (e.g., \cite{hoggar_tight_1989}, \cite{bannai_tight_1989}). 
In the case of projective geometries, many of these theorems constraining $t$ depend on a result given in \cite{hoggar_tight_1984} that, except for the real projective line, the angle set $A(X)$ must be rational.
However, a counter-example exists in the case of the complex projective line: a subset corresponding to the vertices of an icosahedron.
This counter-example is examined in \cite{lyubich_tight_2009}, which attempts to repair the defective proof in \cite{hoggar_tight_1984}. 
Unfortunately, the repair in \cite{lyubich_tight_2009} is restricted to the real, complex, and quaternion projective cases. 
It neglects the octonion projective case and the spherical cases. 
The aim of this note is to complete the repair in \cite{lyubich_tight_2009} by including the remaining octonion and spherical cases. This also generalizes the attempted proof in \cite{hoggar_tight_1984} to the full family of spherical cases. 
In order to treat all possible cases at once, we will work with the primitive idempotents of simple Euclidean Jordan algebras. This allows us to treat the spherical, projective, and octonion cases in a unified way.

\section{Jordan Algebras and T-Designs}

This section reviews simple Euclidean Jordan algebras and the concepts required to identify and describe tight $t$-designs. 
In addition to the real numbers $\mathbb{R}$, the classification of simple Euclidean Jordan algebras consists of four infinite families and one exception. The first infinite family has rank $\rho = 2$ and degree $d \ge 1$. 
The second, third, and fourth family respectively have degree $d = 1,2,4$ and rank $\rho \ge 3$. 
The exceptional Euclidean Jordan algebra has rank $\rho = 3$ and degree $d = 8$. 
Each Euclidean Jordan algebra has a well defined trace that we can use to define a Euclidean inner product,
\begin{align*}
    \langle x, y \rangle = \mathrm{Tr}(x\circ y).   
\end{align*}
Here $\circ$ denotes the Jordan product. 
Let $V$ be a simple Euclidean Jordan algebra of rank $\rho$ and degree $d$. 
We denote by $\mathcal{J}(V)$ the manifold of primitive idempotents of $V$.
The rank $\rho = 2$ family has manifolds of primitive idempotents isometric to spheres. The degrees $d = 1,2,4$ families have manifolds of primitive idempotents respectively isometric to real, complex, and quaternionic projective spaces. Finally, the rank $\rho = 3$, degree $d = 8$ exceptional case has a manifold of primitive idempotents isometric to the octonion projective plane. 
This means we can use simple Euclidean Jordan algebras and their manifolds of primitive idempotents to model the following geometries for $d \ge 1$ and $\rho \ge 3$:
\begin{align*}
    \Omega_{d+1}, 
    \quad \mathbb{RP}^{\rho - 1}, 
    \quad \mathbb{CP}^{\rho-1}, 
    \quad \mathbb{HP}^{\rho - 1}, 
    \quad \mathbb{OP}^2.
\end{align*}
More details about simple Euclidean Jordan algebras can be found in \cite{faraut_analysis_1994}.

Let $X$ be a finite subset of $\mathcal{J}(V)$, the manifold of primitive idempotents of simple Euclidean Jordan algebra $V$. Then $X$ is an \textit{$A$-code} where $A$ is defined as,
\begin{align*}
    A(X) = \left\{ \langle x,y\rangle \mid x\ne y \in X \subset \mathcal{J}(V) \right\}, 
\end{align*}
and $X$ is also a \textit{$t$-design} where $t$ is the largest integer such that $X$ satisfies,
\begin{align*}
    \sum_{x \in X} \sum_{y\in X} Q_k^{0}(\langle x,y\rangle) = 0, \quad k = 1,2, \ldots, t.
\end{align*}
Here we use \textit{renormalized Jacobi functions} $Q_k^\varepsilon(x)$ with $\varepsilon = 0,1$ in terms of the rank $\rho$ and degree $d$ of $V$ as follows (recall that $\Omega_{d+1}$ has rank $2$ and degree $d$):
\begin{align*}
    Q_k^\varepsilon(x) = \left(\frac{\frac{1}{2}\rho d  + 2k + \varepsilon - 1}{\frac{1}{2}\rho d  + k + \varepsilon - 1}\right)\frac{(\frac{1}{2}\rho d )_{k +\varepsilon} }{(\frac{1}{2}d)_{k+\varepsilon} }P_k^{(\frac{1}{2}d (\rho - 1) - 1, \frac{1}{2}d - 1 + \varepsilon)}(2x-1).
\end{align*}
Here and below we use the \textit{Pochhammer symbol} for non-negative integer $n$, which can also be defined in terms of the usual $\Gamma$ function:
\begin{align*}
    (x)_n = x(x+1)\cdots (x+n-1) = \frac{\Gamma(x + n)}{\Gamma(x)}.
\end{align*}
Some of the polynomials that we will use are given below, with $N = \frac{1}{2}\rho d$ and $m = \frac{1}{2}d$. The $N,m$ notation is more common in the literature about projective designs.
\begin{align*}
    Q_0^0(x) &= 1, \\
    Q_0^1(x) &= \frac{N}{m}, \\ 
    Q_1^0(x) &= \left(N +1\right) \left(\frac{N}{m} x - 1\right), \\
    Q_2^0(x) &= \left(\frac{N(N+3)}{2m(m+1)}\right) 
    \left((N(N+3) +2) x^2 -2(N+1)(m+1)x + m(m+1)\right).
\end{align*}
Next we construct the \textit{annihilator polynomial} of $X$ \cite[242]{hoggar_t-designs_1982}:
\begin{align*}
    \mathrm{ann}(x) = \frac{|X|}{\prod_{\alpha\in A} (1-\alpha)} \prod_{\alpha \in A}(x -\alpha).
\end{align*}
By construction, we have $\mathrm{ann}(1) = |X|$ and $\mathrm{ann}(\alpha) = 0$ for each angle $\alpha \in A$. 
The polynomial $\mathrm{ann}(x)$ has degree $|A| = s$, and can be written as a linear combination of our renormalized Jacobi functions (which depend on the rank and degree of the Jordan algebra containing $X$): 
\begin{align*}
    \mathrm{ann}(x) = \sum_{i = 0}^s a_i Q_i^0(x).
\end{align*}
The coefficients $a_0, a_1, \ldots, a_s$ are known as the \textit{indicator coefficients} of $X$. 
To summarize, given a finite subset $X \subset \mathcal{J}(V)$ we can determine the values of $A$ and $t$ needed to describe $X$ as an $A$-code and $t$-design. We can also compute the annihilator polynomial $\mathrm{ann}(x)$ and indicator coefficients $a_0, a_1, \ldots, a_s$. 


A \textit{tight} $(2s - \varepsilon)$-\textit{design} is a finite subset $X \subset \mathcal{J}(V)$ where the annihilator polynomial, as defined above, obtains the following value:
\begin{align*}
    \mathrm{ann}(x) = x^\varepsilon R_{s-\varepsilon}^\varepsilon(x), 
    \quad
    R_{s-\varepsilon}^\varepsilon(x) = \sum_{i = 0}^{s - \varepsilon} Q_i^\varepsilon(x).
\end{align*}
As the inner products of primitive idempotents in a Euclidean Jordan algebra, the elements of $\alpha \in A(X)$ are all real-valued in the range $0 \le \alpha < 1$.  
We are interested in whether a tight $t$-design will have only \textit{rational} elements in angle set $A(X)$.
This note proves the following theorem, which generalizes the theorems of \cite{hoggar_tight_1984} and \cite{lyubich_tight_2009}:
\begin{theorem}
    \label{maintheorem}
    Let $V$ be a simple Euclidean Jordan algebra of rank $\rho$ and degree $d$ with manifold of primitive idempotents $\mathcal{J}(V)$. 
    Let $X$ be a finite subset of $\mathcal{J}(V)$ forming a tight $(2 s -\varepsilon)$-design, namely with $\mathrm{ann}(x) = x^\varepsilon R_{s-\varepsilon}^\varepsilon(x)$. 
    Then the roots of $\mathrm{ann}(x)$, which form the angle set $A(x)$, are rational with exceptions when $(\rho,d) = (2,1)$ with $t \ne 1,2,3,5$ and when $(\rho, d) = (2,2)$ with $t = 5$.
\end{theorem}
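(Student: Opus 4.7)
The plan is to parallel Lyubich's repair of Hoggar's proof and execute it uniformly across all simple Euclidean Jordan algebras, using the primitive-idempotent framework developed above. The argument proceeds in three steps.

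First, I would apply a Christoffel--Darboux summation identity for Jacobi polynomials, specialized at the endpoint, to rewrite
$$R_{s-\varepsilon}^\varepsilon(x) = \sum_{i=0}^{s-\varepsilon} Q_i^\varepsilon(x)$$
as a rational multiple of a single classical Jacobi polynomial. The natural candidate is $P_{s-\varepsilon}^{(A,B)}(2x-1)$ with $A = \tfrac{1}{2}d(\rho-1)-1$ and $B = \tfrac{1}{2}d - 1 + \varepsilon$, so that $\mathrm{ann}(x) = x^\varepsilon R_{s-\varepsilon}^\varepsilon(x)$ is, up to a rational factor, $x^\varepsilon$ times a Jacobi polynomial with rational coefficients in $x$, whose non-zero roots are exactly the elements of $A(X)$.

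Second, I would run the Lyubich Galois-integrality argument in this uniform setting. Suppose for contradiction that some $\alpha_0 \in A(X)$ is irrational, and let $\sigma$ be a non-trivial element of $\mathrm{Gal}(\overline{\mathbb{Q}}/\mathbb{Q})$ acting on the splitting field of $\mathrm{ann}(x)$. Then $\sigma$ permutes the roots of $\mathrm{ann}(x)$ and hence $A(X)$ setwise, non-trivially on the orbit of $\alpha_0$. Expanding $\mathrm{ann}(x) = \sum_i a_i Q_i^0(x)$ recovers indicator coefficients $a_i$ determined by $|X|$ and by the power sums of $A(X)$ via Newton's identities; the tight-design hypothesis forces the $a_i$ to satisfy the positive-definiteness and integer-dimension conditions from the Delsarte linear program ($a_i \ge 0$, $a_0 = |X| \in \mathbb{Z}_{>0}$, together with the half-integrality of the associated harmonic dimensions). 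A non-trivial Galois orbit on $A(X)$ is incompatible with these constraints outside a narrow range of parameters $(\rho,d,s)$.

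Third, I would enumerate the exceptional parameters. For $\rho \ge 3$, or $\rho = 2$ with $d \ge 3$, the integrality obstruction is rigid enough to force rationality of all angles. In the remaining low-rank cases the Jacobi polynomial from step one degenerates: for $(\rho,d) = (2,1)$ it reduces to a Chebyshev polynomial up to scaling, whose roots are cosines $\cos(k\pi/n)$, and Niven's theorem identifies the rational such values as $\{0, \pm\tfrac{1}{2}, \pm 1\}$; converting back to the strength $t$ isolates precisely $t \in \{1,2,3,5\}$ as the rational cases. For $(\rho,d) = (2,2)$ it becomes a Gegenbauer polynomial, and Lyubich's icosahedral configuration realizes an irrational angle at $t = 5$, while all other strengths yield rational angles. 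The main obstacle I expect is step two applied to the exceptional octonion case $(\rho,d) = (3,8)$, whose Peirce subalgebras are not associative matrix algebras and so fall outside Lyubich's original treatment. The payoff of working with primitive idempotents is that the integer dimensions controlling the coefficients $a_i$ depend only on the uniform parameters $\rho$ and $d$; once steps one and two are phrased intrinsically on $\mathcal{J}(V)$, the octonion and higher spherical cases require no separate case analysis.
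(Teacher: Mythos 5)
There is a genuine gap, and it sits exactly at the center of your step two. You correctly set up a field automorphism $\sigma$ permuting the roots of $\mathrm{ann}(x)$, but the assertion that ``a non-trivial Galois orbit on $A(X)$ is incompatible with these constraints outside a narrow range of parameters'' is the theorem restated, not an argument. The Delsarte conditions you invoke ($a_i \ge 0$, $a_0 = |X|$, integrality of harmonic dimensions) are satisfied by the tight design whether or not its angles are rational --- $\mathrm{ann}(x)$ has rational coefficients in every case, so its root set is automatically Galois-stable, and nothing in the positivity or integrality of the indicator coefficients is disturbed by an irrational root paired with its conjugate. The icosahedron is living proof that these constraints alone do not force rationality. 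The mechanism that actually does the work, and which your proposal never touches, is the Bose--Mesner algebra of the association scheme defined by $X$ (available because $t \ge 2s-2$): a field automorphism of $\mathbb{C}$ fixes the $0$--$1$ adjacency matrices $D_\alpha$, hence permutes the \emph{unique} orthogonal idempotent basis $L_0,\ldots,L_s$ of that algebra while preserving matrix rank. When the ranks are pairwise distinct, every $L_i$ is fixed, so the entries $Q_1^0(\langle x,y\rangle)/|X|$ of $L_1$ are rational, and since $Q_1^0$ is linear with rational coefficients this forces $\langle x,y\rangle \in \mathbb{Q}$. The entire content of the proof is then the computation of these ranks ($\mathrm{rank}\,L_i = Q_i^0(1)$ for $i<s$, and $\mathrm{rank}\,L_s = R_{s-\varepsilon}^\varepsilon(1) - R_{s-1}^0(1)$, using Lyubich's corrected $L_s \ne E_s$ for $\varepsilon=1$) and the verification that they are distinct except for $(\rho,d)=(2,1)$ (all interior ranks equal $2$) and the single coincidence $\mathrm{rank}\,L_s = \mathrm{rank}\,L_1$ at $(\rho,d,s)=(2,2,3)$. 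Without this rank bookkeeping your step three has no criterion by which to ``enumerate the exceptional parameters,'' and in particular no way to see why $t=5$ on $\Omega_3$ fails while $t=5$ on larger spheres does not.

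Two smaller points. Your step one (collapsing $R_{s-\varepsilon}^\varepsilon$ by Christoffel--Darboux into a single Jacobi polynomial) is a true identity but leads nowhere useful here: knowing $\mathrm{ann}(x)$ is proportional to a classical Jacobi polynomial does not tell you its roots are rational, and deciding rationality of Jacobi polynomial roots directly is a hard problem the design structure is meant to circumvent; the paper only ever needs the endpoint value $R_{s-\varepsilon}^\varepsilon(1)$ for the trace computation. Your treatment of the circle case via Niven's theorem is correct and matches the paper. Your instinct that the octonion case $(\rho,d)=(3,8)$ requires no separate machinery is also right, but for the rank-based reason: the rank formulas depend only on $\rho$ and $d$, and one checks a single inequality ($f_8(3)=172>0$ in the paper's notation) rather than anything about Peirce subalgebras.
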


\section{Bose-Mesner Algebras and the Idempotent Basis}

We now review the faulty proof given in \cite{hoggar_tight_1984}, which was intended for the degree $d = 1,2,4,8$ cases only (i.e., the projective cases). The notation here is not necessarily the same as that in \cite{hoggar_tight_1984} or \cite{lyubich_tight_2009}. 
The problem with the faulty proof in \cite{hoggar_tight_1984} is that the matrices $E_i$ identified in that paper are not in fact the idempotent basis for the Bose-Mesner algebra that they are assumed to be. The burden of \cite{lyubich_tight_2009} is to replace $E_i$ with the correct idempotents $L_i$ and complete the remainder of the proof, in the case of degrees $d = 1,2,4$. 
We do the same here for any rank and degree.

First, a tight $(2s-\varepsilon)$-design has the property that $t \ge 2s - 2$, which ensures that $X$ defines an \textit{association scheme}.
We can describe an association scheme in terms of the Gram matrix of the elements of $X$ with respect to the Jordan inner product $\langle x,y\rangle = \mathrm{Tr}(x\circ y)$ given above. 
That is, the elements of $G$, a $|X| \times |X|$ matrix, are given by:
\begin{align*}
    (G)_{x,y} = \langle x, y\rangle. 
\end{align*}
We can write this Gram matrix as a linear combination of adjacency matrices as follows: 
\begin{align*}
    G = I + \sum_{\alpha \in A(X)} \alpha D_\alpha.
\end{align*}
Here $D_\alpha$ is the adjacency matrix of the graph on $X$ where an edge exists between any $x,y$ in $X$ with $\langle x,y\rangle = \alpha$.
In this notation, we can write $I = D_1$ since $\langle x , x\rangle = 1$ for all $x$ in $X$. 
Specifically, for $t \ge 2s - 2$ (which is satisfied for tight $t$-designs), the $D_\alpha$ for $\alpha$ in $A(X)$ define the $s = |A(X)|$ classes of an association scheme. 

The matrices $D_\alpha$ and $I$ form the basis for a commutative matrix algebra of dimension $s + 1$ known as the \textit{Bose-Mesner algebra} of $X$ (or rather of the association scheme defined on $X$ via its Gram matrix). 
The Bose-Mesner algebra consists of all $\mathbb{C}$-linear combinations of the commuting basis given by the adjacency matrices and the identity matrix,
\begin{align*}
    \{D_\alpha \mid \alpha \in A(X)\} \cup \{I\}
\end{align*}
The simultaneous eigenvectors of these commuting diagonal matrices can be used to construct a unique orthogonal idempotent basis for the Bose-Mesner algebra \cite[pp. 201-204]{cameron_designs_1991}:
\begin{align*}
 \{L_i \mid i = 0, 1, \ldots, s\}, \quad L_i L_j = \delta_{i,j} L_i.
\end{align*}
We denote by $q_i(\alpha)/|X|$ the coefficients of the $L_i$ elements in the $D_\alpha$ basis, such that:
\begin{align*}
    |X| L_i = q_i(1) I + \sum_{\alpha \in A(X)} q_i(\alpha) D_\alpha. 
\end{align*}
That is, we define the entries of $L_i$ as follows:
\begin{align*}
    (L_i)_{x,y} = \frac{1}{|X|}q_i(\langle x,y\rangle), \quad i = 0, 1, \ldots, s. 
\end{align*}

The faulty proof in \cite{hoggar_tight_1984} assumes that $q_i(\alpha) = Q_i^0(\alpha)$ for $X$ any tight $(2s-\varepsilon)$-design. 
Indeed, \cite{hoggar_tight_1984} uses matrices $E_i$ instead of $L_i$:
\begin{align*}
    (E_i)_{x,y} = \frac{1}{|X|}Q_i^0(\langle x,y\rangle), \quad i = 0, 1, \ldots, s.
\end{align*}
The matrices $E_0, E_1, \ldots, E_{s-\varepsilon}$ are orthogonal idempotents. The problem, as described in \cite{lyubich_tight_2009}, is that for $\varepsilon = 1$ the matrix $E_s$ is not necessarily idempotent, so we cannot assume that $L_s = E_s$, where $s = |A(X)|$. 
To see why, note that the orthogonal basis of $s + 1$ idempotents $L_0, L_1, \ldots, L_s$ must satisfy,
\begin{align*}
    I = \sum_{i = 0}^s L_i.
\end{align*}
The components of this matrix equation are given by,
\begin{align*}
    \delta_{x,y} = \frac{1}{|X|} \sum_{i = 0}^s q_i(\langle x,y\rangle).
\end{align*}
For a tight $t$-design we also have,
\begin{align*}
    \delta_{x,y} = \frac{1}{|X|} \mathrm{ann}(\langle x,y\rangle) 
    = \frac{1}{|X|} \langle x,y\rangle^\varepsilon R_{s-\varepsilon}^\varepsilon(\langle x,y\rangle) 
    = \frac{1}{|X|} \langle x,y\rangle ^\varepsilon \sum_{i = 0}^{s-\varepsilon} Q_i^\varepsilon(\langle x,y\rangle). 
\end{align*}
This means that we require,
\begin{align*}
    \sum_{i = 0}^s q_i(\langle x,y\rangle) = \langle x,y\rangle ^\varepsilon \sum_{i = 0}^{s-\varepsilon} Q_i^\varepsilon(\langle x,y\rangle).
\end{align*}
When $\varepsilon = 0$, this constraint is satisfied by setting $q_i(\langle x,y\rangle) = Q_i^0(\langle x,y\rangle)$. When $\varepsilon = 1$ we need to select $q_s(\langle x,y\rangle)$ more carefully. 

To find $L_s$ we begin with,
\begin{align*}
    L_s = I - \sum_{i = 0}^{s-1} L_i.
\end{align*}
Since the $E_i$ are idempotent for $i\ne s$ we set $L_i = E_i$ for $i \ne s$. This yields the following components of $L_s$:
\begin{align*}
    (L_s)_{x,y} = \delta_{x,y} - \frac{1}{|X|} \sum_{i = 0}^{s-1} Q_i^0(\langle x,y\rangle).
\end{align*}
The first term is equal to $\mathrm{ann}(\langle x,y\rangle)/|X|$ and the sum in the second term is equal to $R_{s-1}^0(\langle x,y\rangle)$. This provides us with a general expression for $L_s$, regardless of whether $\varepsilon$ equals $0$ or $1$:
\begin{align*}
    (L_s)_{x,y} = \frac{1}{|X|} \left( \mathrm{ann}(\langle x,y\rangle) -  R_{s-1}^0(\langle x,y\rangle) \right).
\end{align*}
When $\varepsilon = 0$ we have $\mathrm{ann}(\langle x,y\rangle) = R^0_s(\langle x,y\rangle)$ which ensures that $L_s = E_s$. However, when for $\varepsilon = 1$ we have $L_s \ne E_s$.

\section{Idempotent Ranks and Complex Automorphisms}

Having replaced the faulty $E_i$ with a proper $L_i$ idempotent basis, as described in \cite{lyubich_tight_2009}, we return to the proof in \cite{hoggar_tight_1984}. 
Hoggar's proof involves the so-called wild automorphisms of $\mathbb{C}$.
If we admit the axiom of choice, then the automorphisms of $\mathbb{C}$ include \textit{wild automorphisms}, namely automorphisms of $\mathbb{C}$ that map $\mathbb{R}$ to a dense subset of $\mathbb{C}$ \cite{yale_automorphisms_1966}.
The only complex numbers that are fixed by all wild automorphisms are the rationals $\mathbb{Q}$. 
We use this property of $\mathbb{Q}$ below, assuming the axiom of choice. 

Let $\sigma$ be an automorphism of $\mathbb{C}$, potentially among the wild automorphisms. 
The map $\sigma$ acts as an automorphism of the the Bose-Mesner algebra by acting on all matrix coefficients. Even so, $\sigma$ leaves the basis matrices $I$ and $D_\alpha$ fixed, since they only have $0$ and $1$ for entries.
Since each $L_i$ is a $\mathbb{C}$-linear combination of the $D_\alpha$ matrices, and since the orthogonal idempotent basis is unique, 
the action of $\sigma$ on $\{L_0, L_1, \ldots, L_s\}$ must permute these idempotent matrices. It must also preserve matrix rank, so that $\mathrm{rank}~L_i = \mathrm{rank}~\sigma(L_i)$.

\begin{lemma}
    \cite{hoggar_tight_1984}
    If the idempotent matrices $\{L_0, L_1, \ldots, L_s\}$ have distinct ranks then $\langle x, y\rangle$ is rational for all $x,y$ in tight $t$-design $X$.
    \label{ranksRational}
\end{lemma}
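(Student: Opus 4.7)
The plan is to use the hypothesis of distinct ranks to rigidify the action of an arbitrary automorphism $\sigma$ of $\mathbb{C}$ on the unique orthogonal idempotent basis $\{L_0, \ldots, L_s\}$, force $\sigma$ to fix each $L_i$ as a matrix, and then read off rationality of the angles from one conveniently chosen entry.

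First I would verify that the entrywise action of $\sigma$ sends $\{L_0, \ldots, L_s\}$ to another orthogonal idempotent decomposition of $I$ inside the Bose-Mesner algebra. Since the algebra is spanned by the $0/1$-matrices $I$ and $D_\alpha$, it is closed under entrywise $\sigma$ (a general element $\sum c_\alpha D_\alpha$ is sent to $\sum \sigma(c_\alpha) D_\alpha$), and the relations $L_i L_j = \delta_{ij} L_i$ and $\sum_i L_i = I$ are preserved because $\sigma$ commutes with matrix multiplication and addition. By uniqueness of the orthogonal idempotent decomposition, $\sigma$ must permute the $L_i$. Because entrywise action preserves matrix rank and the ranks are distinct, this permutation is the identity. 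Hence every entry of every $L_i$ is fixed by every $\sigma \in \mathrm{Aut}(\mathbb{C})$, and (assuming the axiom of choice, via the fixed-field property invoked in the text) therefore lies in $\mathbb{Q}$.

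Next I would pick out one useful entry, namely the one coming from $L_1$. For $s \ge 2$, the preceding section gives $L_1 = E_1$, so
\begin{align*}
    (L_1)_{x,y} = \frac{1}{|X|}\,Q_1^0(\langle x,y \rangle) = \frac{N+1}{|X|}\left(\frac{N}{m}\langle x,y\rangle - 1\right),
\end{align*}
which is a nonconstant affine function of $\langle x,y\rangle$ with coefficients in $\mathbb{Q}$ (since $N = \rho d/2$ and $m = d/2$ are rational and $|X|$ is an integer). Rationality of the left side therefore forces $\langle x,y\rangle \in \mathbb{Q}$. The degenerate cases $s \le 1$ reduce to $A(X) \subseteq \{0\}$, which is trivially rational.

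The step I expect to be the main obstacle is the first one: one must be careful that the entrywise action of a wild $\sigma$, which does \emph{not} respect the $\mathbb{C}$-linear coefficients of the $L_i$ in the $\{I, D_\alpha\}$ basis, nevertheless preserves the algebra as a set and preserves the defining relations of the idempotent basis, so that the uniqueness-of-idempotent-decomposition argument legitimately produces a rank-preserving permutation of the $L_i$. Once that is pinned down, the distinct-ranks hypothesis and the explicit linear formula for $Q_1^0$ finish the argument immediately.
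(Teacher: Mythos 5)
Your proposal is correct and follows essentially the same route as the paper: an arbitrary automorphism of $\mathbb{C}$ fixes the $0/1$ basis matrices, hence permutes the unique orthogonal idempotent basis, hence fixes each $L_i$ when the ranks are distinct, and rationality of $\langle x,y\rangle$ is then read off from the affine formula for $Q_1^0$. Your added care about why the entrywise action preserves the idempotent relations, and your separate treatment of $s\le 1$, are reasonable refinements but do not change the argument.
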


\begin{proof}
    If the matrices $\{L_0, L_1, \ldots, L_s\}$ have distinct ranks then any field automorphism of $\mathbb{C}$ must fix these matrices, so that $\sigma(L_i) = L_i$. 
    This ensures that the $L_i$ are matrices with rational entries.
    Specifically, each $L_i$ is of the form $|X|(L_i)_{x,y} = q_i(\langle x,y\rangle)$.
    If $\sigma(L_i) = L_i$ then we also have $\sigma q_i(\langle x,y\rangle) = q_i(\langle x,y\rangle)$.
    If $q_i(\langle x,y \rangle)$ is fixed by all $\sigma$ then it is rational. 
    In the case of $i = 1$ we have,
    \begin{align*}
        (L_1)_{x,y} = Q_1^0(\langle x,y\rangle) = \left(\frac{1}{2}\rho d +1\right) \left(\rho \langle x,y\rangle - 1\right),
    \end{align*}
    This means that $\langle x,y\rangle$ is rational.    
    
\end{proof}

The next task is to compute the ranks of the $L_i$ idempotent matrices. The ranks found here are the same as those calculated in \cite{lyubich_tight_2009}, but presented in a slightly different form.

\begin{lemma}
    \label{specificRanks}
    Let $X$ be a tight $(2s-\varepsilon)$-design. Then the orthogonal idempotents of the Bose-Mesner algebra have the following ranks:
    \begin{align*}
        \mathrm{rank}~L_i = \left\lbrace 
         \begin{array}{c l}
                 Q_i^0(1), 
                 & i = 0,1,\ldots, s-1 \\
                 R_{s-\varepsilon}^\varepsilon (1) - R_{s-1}^0(1), & i = s
         \end{array}
        \right.
    \end{align*}
    Here we have,
    \begin{align*}
        Q_i^0(1) &= \left(\frac{\frac{1}{2}\rho d  + 2i  - 1}{\frac{1}{2}\rho d + i  - 1} \right) \frac{(\frac{1}{2}\rho d)_{i} (\frac{1}{2}\rho d - \frac{1}{2} d)_i}{(\frac{1}{2}d)_{i} i!}.
    \end{align*}
    For $\varepsilon = 0$ we have $R_{s}^0(1) - R_{s-1}^0(1) = Q_s^0(1)$. For $\varepsilon = 1$ we have,
    \begin{align*}
        R_{s-1}^1(1) - R_{s-1}^0(1) = \frac{s}{\frac{1}{2}\rho d + 2 s - 1} Q_s^0(1).
    \end{align*}
\end{lemma}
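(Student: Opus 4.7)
The strategy hinges on the fundamental fact that for an idempotent matrix $M$ we have $\operatorname{rank}(M) = \operatorname{Tr}(M)$. So the plan is to compute the trace of each $L_i$ by summing its diagonal entries, which is straightforward because for $x = y$ we have $\langle x,x\rangle = 1$.

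For $i = 0, 1, \ldots, s-1$, we have $L_i = E_i$ as noted in the previous section, so $(L_i)_{x,x} = Q_i^0(1)/|X|$, and summing over the $|X|$ diagonal entries yields $\operatorname{rank}(L_i) = Q_i^0(1)$. For $i = s$, I would use the formula derived at the end of the preceding section, namely $(L_s)_{x,y} = (\mathrm{ann}(\langle x,y\rangle) - R_{s-1}^0(\langle x,y\rangle))/|X|$. On the diagonal this gives $(L_s)_{x,x} = (\mathrm{ann}(1) - R_{s-1}^0(1))/|X|$, and since the defining property of a tight $(2s-\varepsilon)$-design is $\mathrm{ann}(1) = |X| = R_{s-\varepsilon}^\varepsilon(1)$, summation yields $\operatorname{rank}(L_s) = R_{s-\varepsilon}^\varepsilon(1) - R_{s-1}^0(1)$, exactly as stated. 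The closed form for $Q_i^0(1)$ then follows by substituting the classical value $P_k^{(\alpha,\beta)}(1) = (\alpha+1)_k/k!$ into the definition of $Q_i^0(x)$ and simplifying, after noting that $\tfrac{1}{2}d(\rho - 1) = \tfrac{1}{2}\rho d - \tfrac{1}{2}d$.

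The only non-routine step is verifying the claimed $\varepsilon = 1$ identity $R_{s-1}^1(1) - R_{s-1}^0(1) = \tfrac{s}{\frac{1}{2}\rho d + 2s - 1}\, Q_s^0(1)$. I would rewrite the left side as $\sum_{i=0}^{s-1}(Q_i^1(1) - Q_i^0(1))$ and, using the Pochhammer identities $(N)_{i+1} = (N+i)(N)_i$ and $(m)_{i+1} = (m+i)(m)_i$, pull out a common factor of $(N-m)_i/(m)_{i+1} \cdot (N)_i/i!$ from both terms. After simplification the summand should assume the form $T_{i+1} - T_i$ for an explicit rational expression $T_i$ in $N, m, i$; the sum then telescopes to $T_s - T_0$, and a final algebraic cleanup matches the closed form $\tfrac{s}{N + 2s - 1}\, Q_s^0(1)$.

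The main obstacle is precisely this telescoping computation, because the naive difference does not obviously collapse and one has to choose the splitting of the Pochhammer ratios carefully. A safety check is to verify the identity in the cases $s = 1$ and $s = 2$ by direct substitution, which both reduce to the compact form $N(N-m)/m$ and $N(N-m)(N-m+1)/(m(m+1))$ respectively, confirming that the telescoping is set up correctly before attempting the general $s$.
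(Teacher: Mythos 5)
Your argument for the rank values themselves is the paper's argument verbatim: rank equals trace for an idempotent, the diagonal entries are $Q_i^0(1)/|X|$ for $i<s$ and $(\mathrm{ann}(1)-R_{s-1}^0(1))/|X|$ for $i=s$, and $\mathrm{ann}(1)=R_{s-\varepsilon}^\varepsilon(1)$; the closed form for $Q_i^0(1)$ likewise comes from $P_k^{(\alpha,\beta)}(1)=(\alpha+1)_k/k!$ in both treatments. Where you genuinely diverge is the $\varepsilon=1$ identity. The paper imports Lyubich's closed-form evaluation $R_{s-\varepsilon}^\varepsilon(1)=\frac{(N)_s}{(m)_s}\binom{N-m+s-\varepsilon}{s-\varepsilon}$ (with $N=\tfrac12\rho d$, $m=\tfrac12 d$) and then manipulates binomial coefficients, whereas you propose summing $Q_i^1(1)-Q_i^0(1)$ and telescoping. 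Your route does work and is arguably more self-contained, since it avoids citing the unproved closed form for $R_{s-\varepsilon}^\varepsilon(1)$: taking
\begin{align*}
    T_k = \frac{k}{N+2k-1}\,Q_k^0(1) = \frac{k}{N+k-1}\,\frac{(N)_k (N-m)_k}{(m)_k\, k!},
\end{align*}
a direct computation shows both $Q_k^1(1)-Q_k^0(1)$ and $T_{k+1}-T_k$ equal $\frac{(N)_k(N-m)_k}{(m)_k k!}\cdot\frac{(N-m)(N+2k-1)-k}{(m+k)(N+k-1)}$, so the sum collapses to $T_s-T_0=T_s$, which is exactly the claimed value. The cost of your route is that this $T_k$ has to be guessed (most naturally by working backward from the answer), while the paper's route only needs standard binomial identities once the closed form is granted. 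One small slip in your safety check: for $s=1$ both sides equal $(N-m)/m$, not $N(N-m)/m$ (your $s=2$ value is correct).
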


\begin{proof}
    The rank of an idempotent matrix is equal to its trace. 
    For $L_i$ with $i \ne s$ we have,
    \begin{align*}
        \mathrm{rank}~L_i = \mathrm{Tr}~L_i = 
        \sum_{x \in X}(L_i)_{x,x} 
        = \sum_{x \in X} \frac{Q_i^0(1)}{|X|} = Q_i^0(1).
    \end{align*}
    For $L_s$ we have,
    \begin{align*}
        \mathrm{rank}~L_s = \mathrm{Tr}~L_s = 
        \sum_{x \in X}(L_s)_{x,x} 
        = \sum_{x \in X} \frac{1}{|X|} \left( \mathrm{ann}(1) -  R_{s-1}^0(1) \right).
    \end{align*}
    Since $\mathrm{ann}(1) = 1^\varepsilon R_{s-\varepsilon}^\varepsilon (1)$ we have, 
    \begin{align*}
        \mathrm{rank}~L_s = R_{s-\varepsilon}^\varepsilon (1) - R_{s-1}^0(1).
    \end{align*}
    The specific expression for $Q_i^0(1)$ given above is obtained from the expression for $Q_k^\varepsilon(x)$ given earlier and the property $P_k^{(\alpha, \beta)} = \binom{\alpha+k}{k} = \frac{(\alpha + 1)_k}{k!}$.
    Next we use the following expression from \cite{lyubich_tight_2009}:
    \begin{align*}
         R_{s-\varepsilon}^\varepsilon(1) =  \frac{(\frac{1}{2}\rho d)_s (\frac{1}{2}\rho d - \frac{1}{2}d   + 1)_{s-\varepsilon}}{(\frac{1}{2}d)_s (s-\varepsilon)!} 
         = \frac{(\frac{1}{2}\rho d)_s}{(\frac{1}{2}d)_s} \binom{\frac{1}{2}d(\rho - 1)  + s - \varepsilon}{s - \varepsilon}
    \end{align*}
    By construction $R_{s}^0(1) - R_{s-1}^0(1) = Q_s^0(1)$.
    When $\varepsilon = 1$ we instead have,
    \begin{align*}
       R_{s-1}^1(1) - R_{s-1}^0(1) &= \left(\frac{(\frac{1}{2}\rho d)_s}{(\frac{1}{2}d)_s}   -\frac{(\frac{1}{2}\rho d)_{s-1}}{(\frac{1}{2}d)_{s-1}}\right)
       \binom{\frac{1}{2}d(\rho - 1)  + s - 1}{s - 1}.
    \end{align*}
    Using $\binom{a}{b-1} = \frac{b}{a-b+1}\binom{a}{b}$ and then $\binom{a+b}{b} = \frac{(a+1)_b}{b!}$ we have,
    \begin{align*}
       R_{s-1}^1(1) - R_{s-1}^0(1) 
        &=  \frac{(\frac{1}{2}\rho d)_s}{(\frac{1}{2}d)_s}\left(1 - \frac{\frac{1}{2}d + s - 1}{\frac{1}{2}\rho d+s-1} \right)
        \frac{s}{\frac{1}{2}d(\rho - 1)  }\binom{\frac{1}{2}d(\rho - 1)  + s - 1}{s} \\
        &= \frac{s}{\frac{1}{2}\rho d + s - 1}   \frac{(\frac{1}{2}\rho d)_s (\frac{1}{2}\rho d - \frac{1}{2} d)_s}{(\frac{1}{2}d)_s s!} \\
        &= \frac{s}{\frac{1}{2}\rho d + 2 s - 1} Q_s^0(1).
    \end{align*}
\end{proof}


The exceptional case of the unit circle, $\Omega_2 \cong \mathbb{RP}^1$, deserves specific attention. 
We examine it before proceeding with the remainder of the proof. 

\begin{lemma}
    \label{circleranks}
    When rank $\rho = 2$ and degree $d = 1$, the case of $\Omega_2 \cong \mathbb{RP}^1$, we have 
    \begin{align*}
        \mathrm{rank}~L_0 = 1,~
        \mathrm{rank}~L_i = 2, \quad i = 1, \ldots, s-\varepsilon.
    \end{align*}
    When $\varepsilon = 1$, we have $\mathrm{rank}~L_s = 1$. 
\end{lemma}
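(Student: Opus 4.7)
The plan is to simply specialize Lemma \ref{specificRanks} to the parameters $\rho = 2$, $d = 1$, where the relevant constants become $\tfrac{1}{2}\rho d = 1$ and $\tfrac{1}{2}d = \tfrac{1}{2}$. Since Lemma \ref{specificRanks} already provides closed-form expressions for $\mathrm{rank}~L_i$ in terms of $Q_i^0(1)$ and a correction term for the top idempotent when $\varepsilon = 1$, the whole proof reduces to a short direct calculation.

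First I would compute $Q_i^0(1)$ using the formula from Lemma \ref{specificRanks}. For $i = 0$, the value $Q_0^0(1) = 1$ immediately gives $\mathrm{rank}~L_0 = 1$. For $i \ge 1$, substituting $\tfrac{1}{2}\rho d = 1$ produces the prefactor $\tfrac{2i}{i} = 2$, while the Pochhammer ratio $(1)_i (1/2)_i / \bigl((1/2)_i \, i!\bigr)$ collapses to $1$ after the $(1/2)_i$ factors cancel and $(1)_i = i!$. This shows $Q_i^0(1) = 2$ for $i \geq 1$, establishing $\mathrm{rank}~L_i = 2$ for $i = 1, \ldots, s-1$, and also $\mathrm{rank}~L_s = Q_s^0(1) = 2$ in the $\varepsilon = 0$ subcase.

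Second, I would handle the $\varepsilon = 1$ subcase by invoking the formula
\begin{align*}
    \mathrm{rank}~L_s = \frac{s}{\tfrac{1}{2}\rho d + 2s - 1} \, Q_s^0(1)
\end{align*}
from Lemma \ref{specificRanks}. With $\tfrac{1}{2}\rho d = 1$ the denominator becomes $2s$, and since $Q_s^0(1) = 2$ the whole expression evaluates to $1$. This accounts for the final assertion $\mathrm{rank}~L_s = 1$ when $\varepsilon = 1$.

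There is no real obstacle here; the entire argument is arithmetic on the formulas already established in Lemma \ref{specificRanks}. The only thing to be careful about is the degenerate case $i = 0$, where the prefactor $\tfrac{\tfrac{1}{2}\rho d + 2i - 1}{\tfrac{1}{2}\rho d + i - 1}$ formally gives $0/0$ in this specialization; I would sidestep this by simply reading $Q_0^0(1) = 1$ directly from the definition $Q_0^0(x) = 1$ rather than evaluating the general formula at $i = 0$.
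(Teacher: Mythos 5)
Your proposal is correct and follows essentially the same route as the paper: specialize the formulas of Lemma \ref{specificRanks} to $\tfrac{1}{2}\rho d = 1$, $\tfrac{1}{2}d = \tfrac{1}{2}$, observe that the Pochhammer ratio collapses so $Q_i^0(1) = 2$ for $i \ge 1$, read $Q_0^0(1) = 1$ directly from the definition, and evaluate $\tfrac{s}{2s}\,Q_s^0(1) = 1$ for the $\varepsilon = 1$ case. Your explicit note about sidestepping the $0/0$ prefactor at $i = 0$ matches what the paper does implicitly.
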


\begin{proof}
We evaluate $Q_i^0(1)$ for $\rho = 2$ and $d=1$. First, $Q_0^0(x) = 1$ so we have $\mathrm{rank}~L_0 = Q_0^0(1) = 1$. In what follows we assume $i > 0$. 
The following expression simplifies to $2$:
\begin{align*}
    Q_i^0(1) &= \left(\frac{1  + 2i  - 1}{1 + i  - 1} \right) \frac{(1)_{i} (1 - \frac{1}{2} )_i}{(\frac{1}{2})_{i} i!} = 2.
\end{align*}
When $\varepsilon = 1$ we have,
\begin{align*}
    \mathrm{rank}~L_s = R_{s-1}^1(1) - R_{s-1}^0(1) = \frac{s}{1 + 2 s - 1} Q_s^0(1)  = \frac{2}{2} = 1.
\end{align*}

\end{proof}


The remaining cases satisfy the following lemma:

\begin{lemma}
    \cite{lyubich_tight_2009}
    For any tight $(2s-\varepsilon)$-design, not in $\Omega_2 \cong \mathbb{RP}^{1}$, the idempotent basis matrices satisfy,
    \begin{align*}
        \mathrm{rank}~L_0 < \mathrm{rank}~L_1 < \cdots < \mathrm{rank}~L_{s-\varepsilon}. 
    \end{align*}
    \label{ascendinglemma}
\end{lemma}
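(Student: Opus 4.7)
The plan is to reduce the lemma to showing that the sequence $Q_0^0(1), Q_1^0(1), \ldots, Q_{s-\varepsilon}^0(1)$ is strictly increasing outside the excluded case $(\rho,d)=(2,1)$. By Lemma~\ref{specificRanks}, $\mathrm{rank}~L_i = Q_i^0(1)$ for $0 \le i \le s-1$, and when $\varepsilon = 0$ this identity extends to $i = s$ since $R_s^0(1) - R_{s-1}^0(1) = Q_s^0(1)$. So it suffices to establish the ratio $Q_i^0(1)/Q_{i-1}^0(1) > 1$ for each $1 \le i \le s-\varepsilon$ in every non-excluded geometry.

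First I would form that ratio directly using the explicit formula for $Q_i^0(1)$ from Lemma~\ref{specificRanks}. With $N = \rho d/2$ and $m = d/2$, a straightforward cancellation of Pochhammer factors yields
\begin{align*}
\frac{Q_i^0(1)}{Q_{i-1}^0(1)} = \frac{N+2i-1}{N+2i-3}\cdot\frac{N+i-2}{m+i-1}\cdot\frac{N-m+i-1}{i}.
\end{align*}
Since every geometry under consideration other than $(\rho,d)=(2,1)$ has $N > 1$, the factor $N+2i-3$ is strictly positive for all $i \ge 1$, so this is a well-defined positive real number.

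Next I would bound each factor separately. The first factor is strictly greater than $1$. The inequalities $(N+i-2)/(m+i-1) \ge 1$ and $(N-m+i-1)/i \ge 1$ both reduce to the single condition $N - m \ge 1$, equivalently $d(\rho-1) \ge 2$. This condition is satisfied by every geometry relevant here: spheres $\Omega_{d+1}$ with $d \ge 2$, the projective spaces $\mathbb{RP}^{\rho-1}$, $\mathbb{CP}^{\rho-1}$, $\mathbb{HP}^{\rho-1}$ with $\rho \ge 3$, and the octonion projective plane $\mathbb{OP}^2$. Multiplying the three factors, the ratio is strictly greater than $1$, so $Q_i^0(1) > Q_{i-1}^0(1)$ for each $1 \le i \le s-\varepsilon$, giving the desired strictly ascending chain of ranks.

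The main obstacle is the boundary cases $(\rho, d) = (3, 1)$ and $(2,2)$, where $N - m = 1$ exactly and the second and third factors both collapse to $1$. Here the strict inequality comes entirely from the first factor, so the argument depends on $N > 1$ to ensure $N + 2i - 3 > 0$ already at $i = 1$; this holds since $N = 3/2$ and $N = 2$ in these two cases. In all remaining cases $d(\rho-1) > 2$, so all three factors are strictly greater than $1$ and monotonicity is immediate.
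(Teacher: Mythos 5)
Your proposal is correct and takes essentially the same approach as the paper: both form the ratio of consecutive values $Q_i^0(1)/Q_{i-1}^0(1)$ from the explicit formula and bound it factor by factor. The one difference is that your regrouping of the numerators makes both nontrivial factors reduce to the single uniform condition $N-m\ge 1$, i.e.\ $d(\rho-1)\ge 2$, whereas the paper's grouping needs $\rho d\ge 4$ for its third factor and must therefore check $(\rho,d)=(3,1)$ separately; your version cleanly absorbs that case.
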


\begin{proof}
    This is equivalent to $Q_0^0(1) < Q_1^0(1) < \cdots < Q_{s-\varepsilon}^0(1)$.
    We can compute the following expression:

    \begin{align*}
        Q_{i+1}^0(1) &= \left(\frac{\frac{1}{2}\rho d  + 2i  + 1}{\frac{1}{2}\rho d  + 2i  - 1 } \right)\left(\frac{\frac{1}{2}d (\rho - 1) + i}{\frac{1}{2}d + i }\right) \left(\frac{\frac{1}{2}\rho d + i  - 1}{i+1}\right) Q_i^0(1)
    \end{align*}
    The first factor is always greater than one. 
    The second factor is always greater than or equal to one. 
    The third factor is greater than or equal to one when $\rho d \ge 4$.  
    This leaves the $(\rho,d) = (3,1)$ case to check, which yields $Q_{i+1}^0(1) = \left(2i + \frac{5}{2}\right)/\left(2i + \frac{1}{2}\right) Q_i^0(1)$. 
    Therefore, for $(\rho,d) \ne (2,1)$ we have $Q_i^0(1) < Q_{i+1}^0(1)$ for all $i\ge 0$. 
\end{proof}


\section{Completing the Proof}

To complete the proof of Theorem \ref{maintheorem} we need to apply Lemma \ref{ranksRational} to all the relevant cases. 
However, since $\Omega_2 \cong \mathbb{RP}^1$ generally involves idempotent basis matrices of equal rank, we cannot use Lemma \ref{ranksRational} for this exceptional case.
This case is examined carefully for completeness in \cite{lyubich_tight_2009} and we address it in the following theorem.

\begin{theorem}
    \label{circlecase} \cite{lyubich_tight_2009}
    A tight $t$-design in $\Omega_2 \cong \mathbb{RP}^1$ has a rational angle set if and only if $t = 1,2,3,5$. 
\end{theorem}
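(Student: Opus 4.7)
The plan is to classify the tight $t$-designs on $\Omega_2\cong\mathbb{RP}^1$ explicitly, read off the angle set, and then apply a Niven-type rationality argument.

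By Lemma~\ref{circleranks} the ranks of the $L_i$ sum to $|X|=1+2(s-\varepsilon)+\varepsilon=t+1$. For $(\rho,d)=(2,1)$ the relevant Jacobi polynomial satisfies $P_k^{(-1/2,-1/2)}(\cos 2\theta)\propto\cos(2k\theta)$, so parameterizing projective lines by angles $\theta_x\in[0,\pi)$ with $\langle x,y\rangle=\cos^2(\theta_x-\theta_y)$ reduces the design condition $\sum_{x,y}Q_k^0(\langle x,y\rangle)=0$ to $|\sum_{x}e^{2ik\theta_x}|^2=0$ for $k=1,\ldots,t$. These $t$ Newton--Vandermonde relations on $t+1$ points on the unit circle force the $e^{2i\theta_x}$ to be the $(t+1)$-th roots of a single complex scalar, so $X$ is (up to rotation) the regular configuration of $N=t+1$ equispaced lines, with angle set
\begin{align*}
A(X)=\{\cos^2(k\pi/N):1\le k\le\lfloor N/2\rfloor\}.
\end{align*}

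Next I would apply a Niven-style criterion. If $\alpha=\cos^2(k\pi/N)\in\mathbb{Q}$ then $4\alpha-2=2\cos(2k\pi/N)$ is simultaneously rational and an algebraic integer (being twice the real part of a root of unity), so it must lie in $\{-2,-1,0,1,2\}$. This pins $\alpha$ to $\{0,1/4,1/2,3/4,1\}$ and forces $k/N$ modulo $1$ to lie in $\{0,1/6,1/4,1/3,1/2\}$.

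Finally, a short case check determines which $N$ satisfy this constraint for every $k\in\{1,\ldots,\lfloor N/2\rfloor\}$: for $N\ge 7$ the ratio $1/N$ is strictly between $0$ and $1/6$, so already $k=1$ fails, and $N=5$ fails at $k=1$ as well; the surviving values $N\in\{2,3,4,6\}$ all pass, giving $t=N-1\in\{1,2,3,5\}$. The main obstacle is the first stage: although the rank-sum argument cleanly yields $|X|=t+1$, the uniqueness step identifying $X$ with a genuine regular polygon (and not just a configuration of the correct cardinality) requires the Chebyshev/Fourier translation above. Once the angle set is explicit the rationality analysis reduces to a finite computation.
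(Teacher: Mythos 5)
Your proof is correct and follows essentially the same route as the paper: identify the tight $t$-design in $\Omega_2$ as the regular $(t+1)$-gon of lines, read off the angle set $\{\cos^2(k\pi/(t+1))\}$, and apply Niven's theorem to decide rationality. The only difference is one of detail: the paper simply asserts that the tight design is the regular polygon and cites the rationality criterion, whereas you prove both steps (the power-sum/Newton's-identities argument forcing the $e^{2i\theta_x}$ to be equispaced, and the algebraic-integer argument pinning $2\cos(2k\pi/N)$ to $\{-2,-1,0,1,2\}$), which makes your version more self-contained but not a different proof.
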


\begin{proof}
    A tight $t$-design in $\Omega_2 \cong \mathbb{RP}^1$ always exists and is given by the corners of a regular $(t+1)$-gon. 
    The angle between any pair of design points on the unit circle is $\theta = 2 m \pi/(t+1)$ for some integer $m$. The corresponding Jordan inner product is $\langle x, y\rangle = \cos^2\left(\frac{\theta}{2}\right) = \frac{1}{2}+\frac{1}{2}\cos\theta$.
    This means that $\langle x,y\rangle$ is rational if and only if $\cos \theta$ is rational. The only values of $t$ for which $\cos\theta$ is rational are known to be $t = 1,2,3,5$. 
\end{proof}

We now examine the remaining cases with $(\rho,d) \ne (2,1)$, i.e., distinct from the unit circle. The simplest to deal with, using Lemma \ref{ranksRational}, is the case where $t = 2s$ is even. 

\begin{theorem}
    A tight $(2s)$-design, not in $\Omega_2 \cong \mathbb{RP}^1$, has a rational angle set.
\end{theorem}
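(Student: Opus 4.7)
The plan is to combine the three lemmas that have already been established, since the even-strength case $\varepsilon = 0$ is exactly the situation in which Lemma \ref{ranksRational} can be applied without any extra work on the top idempotent.

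First I would observe that for $\varepsilon = 0$ the rank formula of Lemma \ref{specificRanks} simplifies uniformly across all indices: for $i = 0, 1, \ldots, s-1$ we have $\mathrm{rank}\,L_i = Q_i^0(1)$ by definition, and for the top idempotent we have
\begin{align*}
    \mathrm{rank}\,L_s = R_s^0(1) - R_{s-1}^0(1) = Q_s^0(1),
\end{align*}
which is precisely the identity $R_s^0(1) - R_{s-1}^0(1) = Q_s^0(1)$ noted in Lemma \ref{specificRanks}. So the full list of ranks is simply $Q_0^0(1), Q_1^0(1), \ldots, Q_s^0(1)$.

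Next I would invoke Lemma \ref{ascendinglemma}, which asserts that away from the excluded case $(\rho,d) = (2,1)$ the sequence $Q_0^0(1) < Q_1^0(1) < \cdots < Q_{s-\varepsilon}^0(1) = Q_s^0(1)$ is strictly increasing. Consequently the $s+1$ idempotents $L_0, L_1, \ldots, L_s$ have pairwise distinct ranks.

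Finally I would apply Lemma \ref{ranksRational}: distinct ranks force every field automorphism $\sigma$ of $\mathbb{C}$ to fix each $L_i$ setwise, hence entry-by-entry, and in particular the entries of $L_1$ are fixed by all $\sigma$ and therefore rational. Reading off $(L_1)_{x,y}$ in terms of $\langle x, y\rangle$ as in the proof of Lemma \ref{ranksRational} shows that every $\langle x,y\rangle$ is rational, i.e.\ $A(X) \subset \mathbb{Q}$. Since every step is a direct invocation of a lemma already in hand, there is essentially no obstacle here; the only mild point of care is verifying that the identity $R_s^0(1) - R_{s-1}^0(1) = Q_s^0(1)$ makes $L_s$ fit into the same ascending chain of ranks as the lower $L_i$, which is exactly what removes any gap between Lemma \ref{ascendinglemma} and Lemma \ref{ranksRational} in the even-strength case.
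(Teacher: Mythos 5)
Your proposal is correct and follows the paper's argument exactly: Lemma \ref{ascendinglemma} (together with the $\varepsilon=0$ identity $R_s^0(1)-R_{s-1}^0(1)=Q_s^0(1)$ from Lemma \ref{specificRanks}) gives distinct ranks for all of $L_0,\ldots,L_s$, and Lemma \ref{ranksRational} then yields rationality. The only difference is that you spell out why $L_s$ joins the ascending chain in the even case, a detail the paper leaves implicit.
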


\begin{proof}
    According to Lemma \ref{ascendinglemma}, the idempotent basis matrices $L_i$ each have distinct rank. Therefore, by Lemma \ref{ranksRational}, the angle set is rational.
\end{proof}

We now examine cases with odd $t = 2s-1$. 

\begin{lemma}
    \label{ranksLsL1}
    Let $(\rho,d) \ne (2,1)$. If $L_s$ and $L_1$ have distinct ranks then the angle set is rational. 
\end{lemma}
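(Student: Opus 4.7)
The plan is to mimic the argument of Lemma \ref{ranksRational}, but applied only to the single idempotent $L_1$, using the hypothesis to rule out the one ambiguity that Lemma \ref{ascendinglemma} leaves open. When $\varepsilon = 0$ we have $R_{s}^0(1) - R_{s-1}^0(1) = Q_s^0(1)$, so Lemma \ref{ascendinglemma} already yields $s+1$ distinct ranks and the claim follows from Lemma \ref{ranksRational}. So the substantive case is $\varepsilon = 1$, where Lemma \ref{ascendinglemma} guarantees that $L_0, L_1, \ldots, L_{s-1}$ have pairwise distinct ranks, but leaves open whether $\mathrm{rank}\,L_s$ duplicates one of those earlier ranks.

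First I would recall the action of an arbitrary (possibly wild) automorphism $\sigma$ of $\mathbb{C}$ on the Bose--Mesner algebra: $\sigma$ fixes $I$ and each $D_\alpha$, permutes the idempotent basis $\{L_0, L_1, \ldots, L_s\}$, and preserves matrix rank. Then I would run the following three-line deduction. By hypothesis $\mathrm{rank}\,L_s \ne \mathrm{rank}\,L_1$, and $\mathrm{rank}\,\sigma(L_1) = \mathrm{rank}\,L_1$, so $\sigma(L_1) \ne L_s$; hence $\sigma(L_1)$ lies in $\{L_0, L_1, \ldots, L_{s-1}\}$. But those $s$ idempotents have pairwise distinct ranks by Lemma \ref{ascendinglemma} applied to the $\varepsilon = 1$ case (which gives strict inequalities through $L_{s-1}$), so $\sigma(L_1) = L_1$.

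Since $\sigma(L_1) = L_1$ for every $\sigma \in \mathrm{Aut}(\mathbb{C})$, every entry of $L_1$ is a rational number. Using the explicit expression
\begin{align*}
    (L_1)_{x,y} = \frac{1}{|X|} Q_1^0(\langle x, y\rangle) = \frac{1}{|X|}\left(\tfrac{1}{2}\rho d + 1\right)\left(\rho \langle x,y\rangle - 1\right),
\end{align*}
rationality of $(L_1)_{x,y}$ immediately implies rationality of $\langle x,y\rangle$ for all $x, y \in X$, which is exactly the angle set being rational.

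I do not anticipate a real obstacle here; the argument is essentially a careful refinement of Lemma \ref{ranksRational}. The only point requiring care is confirming that, under the hypothesis and under Lemma \ref{ascendinglemma}, the orbit of $L_1$ under $\mathrm{Aut}(\mathbb{C})$ is forced to be a singleton even though the global rank multiset may have a repetition at $L_s$. Isolating $L_1$ (rather than trying to pin down $L_s$) is what makes the argument go through, and this is precisely why the hypothesis is stated in terms of $L_1$ rather than, say, $L_{s-1}$.
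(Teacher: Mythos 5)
Your proposal is correct and follows essentially the same argument as the paper: Lemma \ref{ascendinglemma} pins down the ranks of $L_0,\ldots,L_{s-1}$ as pairwise distinct, the hypothesis $\mathrm{rank}\,L_s \ne \mathrm{rank}\,L_1$ rules out the only remaining possibility for $\sigma(L_1)$, so $L_1$ is fixed by every automorphism of $\mathbb{C}$ and the explicit form of $Q_1^0$ forces $\langle x,y\rangle$ to be rational. Your write-up is merely more explicit than the paper's (including the routine $\varepsilon=0$ case and the correct $1/|X|$ normalization), with no substantive difference.
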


\begin{proof}
    According to Lemma \ref{ascendinglemma}, the ranks of all $L_i$ are distinct except possibly for $L_s$ when $\varepsilon = 1$.  
    Therefore no field automorphism of $\mathbb{C}$ interchanges $L_1$ with any $L_i$ other than possibly $L_s$. 
    If $L_s$ and $L_1$ have distinct ranks, then $L_1$ is fixed by all field automorphisms of $\mathbb{C}$ and therefore is a matrix with rational entries $Q_1^0(\alpha)/|X|$. As described in the proof of Lemma \ref{ranksRational}, it follows from the rationality of $Q_1^0(\alpha)$ that $\alpha$ is rational.
\end{proof}

The simplest odd $t = 2s-1$ case is for $s=1$, corresponding to a tight $1$-design. A tight $1$-design exists in each $\Omega_{d+1}$ and $\mathbb{FP}^{\rho-1}$ and is also known as a \textit{Jordan frame}, or full rank set of orthogonal primitive idempotents. The annihilator polynomial is $\mathrm{ann}(x) = \rho x$, so the angle set is $A = \{0\}$, which is clearly rational. 
Since this case is fully understood, we will assume $s > 1$ in what follows.

We now address the remaining spherical cases.

\begin{theorem}
    A tight $(2s-1)$-design in $\Omega_{d+1}$ with $d > 1$ has a rational angle set except when $d = 2$ and $s = 3$.
\end{theorem}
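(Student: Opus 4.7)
The plan is to apply Lemma \ref{ranksLsL1}, which reduces the theorem to showing that $\mathrm{rank}~L_1 \ne \mathrm{rank}~L_s$ for every spherical tight $(2s-1)$-design with $d > 1$ and $s > 1$, apart from the exceptional pair $(d,s) = (2,3)$. The first step is to specialize the general rank formulas of Lemma \ref{specificRanks} to the spherical setting $\rho = 2$. Here $\frac{1}{2}\rho d = d$ and $\frac{1}{2}\rho d - \frac{1}{2}d = d/2$, so the Pochhammer factor $(d/2)_i$ cancels and the formula for $Q_i^0(1)$ collapses to
\[
    Q_i^0(1) = \frac{d+2i-1}{d+i-1}\cdot\frac{(d)_i}{i!}.
\]
Setting $i = 1$ gives $\mathrm{rank}~L_1 = Q_1^0(1) = d+1$. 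Since $t = 2s-1$ forces $\varepsilon = 1$, substituting into the closed form $\mathrm{rank}~L_s = \frac{s}{d+2s-1} Q_s^0(1)$ and simplifying yields the clean expression $\mathrm{rank}~L_s = \binom{d+s-2}{s-1}$.

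The remainder of the argument is then a short Diophantine analysis of the equation $\binom{d+s-2}{s-1} = d+1$ under the constraints $d \ge 2$, $s \ge 2$. I would split on $s$. For $s = 2$ the equation reads $d = d+1$, which is impossible. For $s = 3$ it reads $d(d+1)/2 = d+1$, whose only solution with $d \ge 2$ is $d = 2$; this is precisely the icosahedron counter-example that the theorem allows. For $s \ge 4$, the ratio
\[
    \frac{\binom{d+s-1}{s}}{\binom{d+s-2}{s-1}} = \frac{d+s-1}{s}
\]
exceeds $1$ for $d \ge 2$, so $\binom{d+s-2}{s-1}$ is strictly increasing in $s$. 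It is therefore bounded below by $\binom{d+2}{3} = d(d+1)(d+2)/6$, which strictly exceeds $d+1$ whenever $d(d+2) > 6$, i.e.\ for every $d \ge 2$. This rules out every $s \ge 4$, so $(d,s) = (2,3)$ is the unique exception and Lemma \ref{ranksLsL1} delivers the conclusion in all other cases.

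The only real obstacle is the algebraic bookkeeping in Step 1: the cancellation of $(d/2)_i$ and the collapse of $\mathrm{rank}~L_s$ to a single binomial coefficient is special to $\rho = 2$, and I would want to verify it carefully from the general formulas, especially the factor $s/(d+2s-1)$ coming from the $\varepsilon = 1$ branch of Lemma \ref{specificRanks}. Once the two ranks are in closed form, the rest is an elementary inequality, and the theorem follows immediately from Lemma \ref{ranksLsL1}.
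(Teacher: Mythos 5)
Your proposal is correct and follows essentially the same route as the paper: specialize the rank formulas to $\rho = 2$ to get $\mathrm{rank}\,L_1 = d+1$ and $\mathrm{rank}\,L_s = \binom{d+s-2}{s-1}$, then show these coincide only for $(d,s) = (2,3)$ and invoke the distinct-ranks lemma. Your monotonicity argument in $s$ for the case $s \ge 4$ merely fills in a detail the paper asserts without elaboration, so the two proofs are substantively identical.
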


\begin{proof}
    In the spherical cases we have $\rho = 2$ and can simplify $Q_i^0(1)$ to the following:
    \begin{align*}
    Q_i^0(1) &= \left(\frac{d  + 2i  - 1}{d + i  - 1} \right) \frac{(d)_{i} }{ i!}.
    \end{align*}
    This means that,
    \begin{align*}
     \mathrm{rank}~L_1 = Q_1^0(1) &= d+1.
    \end{align*}
    Likewise,
    \begin{align*}
        \mathrm{rank}~L_s &= R_{s-1}^1(1) - R_{s-1}^0(1) = \frac{s}{ d + 2 s - 1} Q_s^0(1) 
        = \binom{d + s - 2}{s-1}.
    \end{align*}
    For $s = 2$, we always have $\mathrm{rank}~L_s - \mathrm{rank}~L_1 = -1$. By Lemma \ref{ranksRational}, a tight spherical $3$-design must therefore have rational angle set. 
    For $s > 2$, $\mathrm{rank}~L_s - \mathrm{rank}~L_1 \ge 0$ with the equality achieved only when $d = 2$ and $s=3$. Aside from this exception, Lemma \ref{ranksRational} ensures a rational angle set. 
    Therefore, the only case where $\mathrm{rank}~L_s = \mathrm{rank}~L_1$ is for $d=2$ and $s=3$. A tight $5$-design exists in this case and is known to be the vertices of a regular icosahedron in $\Omega_3$. 
\end{proof}

\begin{remark}
    In contrast to Lyubich, who only deals with projective cases of degree $d = 1,2,4$, we have dealt here with all spherical ($\rho = 2$) cases simultaneously.
    This provides slightly different approach to identifying  Lyubich's exception in $\Omega_3 \cong \mathbb{CP}^1$, namely as spherical design rather than as a projective design. 
\end{remark}

We will call a design with $\rho > 2$ a \textit{strictly projective design}. 
All remaining cases are strictly projective. 
In what follows we will therefore require that $\rho > 2$. 

\begin{theorem}
    A strictly projective tight $(2s-1)$-design, i.e., with $\rho > 2$, has a rational angle set.
\end{theorem}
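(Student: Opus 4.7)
By Lemma~\ref{ranksLsL1}, it suffices to show $\mathrm{rank}~L_s \ne \mathrm{rank}~L_1$ for every strictly projective tight $(2s-1)$-design with $s \ge 2$; the $s = 1$ case is the Jordan frame, whose angle set $\{0\}$ is rational. Setting $a = \tfrac{1}{2}\rho d$ and $b = \tfrac{1}{2} d$, Lemma~\ref{specificRanks} together with the Pochhammer identity $(a)_s = (a+s-1)(a)_{s-1}$ gives
\begin{align*}
    \mathrm{rank}~L_s = \frac{(a)_{s-1}(a-b)_s}{(b)_s\,(s-1)!}, \qquad \mathrm{rank}~L_1 = \frac{(a+1)(a-b)}{b}.
\end{align*}
Cancelling the nonzero factor $(a-b)/b$ turns the equation $\mathrm{rank}~L_s = \mathrm{rank}~L_1$ into the single condition $h(s) = a+1$, where
\begin{align*}
    h(s) := \frac{(a)_{s-1}(a-b+1)_{s-1}}{(b+1)_{s-1}\,(s-1)!}.
\end{align*}

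I would next establish strict monotonicity: the ratio $h(s+1)/h(s) = (a+s-1)(a-b+s)/(s(b+s))$ exceeds $1$ whenever $\rho \ge 3$, since $a > 1$ and $a - b > b$ make both numerator factors strictly dominate their denominator partners. Hence it suffices to block equality at $s = 2$ and then propagate by monotonicity. The inequality $h(2) > a+1$ simplifies to $\rho(\rho-2) b^2 > b+1$, and a short finite check confirms it for every admissible strictly projective $(\rho, d)$ except $(\rho, d) = (3, 1)$.

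For the exceptional case $(\rho, d) = (3, 1)$, corresponding to $\mathbb{RP}^2$, the ratio telescopes as $h(s+1)/h(s) = (s+1)/s$, whence $h(s) = s$ and $\mathrm{rank}~L_s = (\rho - 1) h(s) = 2s$. Since $2s$ is always even while $\mathrm{rank}~L_1 = (a+1)(\rho - 1) = 5$ is odd, equality is blocked by a parity obstruction for every integer $s \ge 2$. The main obstacle will be organizing the finite case check behind $\rho(\rho-2) b^2 > b+1$ and noticing that, although $\mathbb{RP}^2$ escapes this generic inequality, the telescoping there produces even ranks whose parity is incompatible with $\mathrm{rank}~L_1 = 5$.
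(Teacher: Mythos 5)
Your argument is correct and follows essentially the same route as the paper: reduce to $\mathrm{rank}\,L_s \ne \mathrm{rank}\,L_1$ via Lemma~\ref{ranksLsL1}, show the relevant quantity is increasing in $s$ so that the $s=2$ case controls all larger $s$, and isolate $(\rho,d)=(3,1)$ as the one strictly projective pair escaping the generic inequality. Your execution is slightly sharper in two places: you actually verify the monotonicity with the explicit ratio $h(s+1)/h(s)=(a+s-1)(a-b+s)/(s(b+s))$ (the paper only asserts the analogous fact by reference to Lemma~\ref{ascendinglemma}), and you dispatch $\mathbb{RP}^{2}$ with the exact formula $\mathrm{rank}\,L_s=2s$ and its parity clash with $\mathrm{rank}\,L_1=5$, where the paper instead re-runs the lower bound from a base case of $s=3$ after separately noting that $f_1(3)\ne 0$ covers $s=2$.
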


\begin{proof}
    The proof method shown here is equivalent to the method used in \cite{lyubich_tight_2009}, but the parameters are allowed to extend to the octonion case ($d = 8$) yet restricted to the strictly projective $\rho > 2$ cases.
    We need to verify that $\mathrm{rank}~L_s \ne \mathrm{rank}~L_1$. 
    In the cases below we use that fact that,
    \begin{align*}
        \mathrm{rank}~L_s \ge \frac{2}{\frac{1}{2}\rho d + 3}Q_2^0(1)
    \end{align*}
    Specifically, as a function of $s$ the expression $s Q_s^0(1) /(\frac{1}{2}\rho d + 2s - 1)$ decreases as the value of $s$ decreases. 
    We can confirm this fact using the expression given in the proof of Lemma \ref{ascendinglemma}.
    We will therefore assume that $\mathrm{rank}~L_s$ is greater than or equal to the same expression evaluated at $s=2$.
    This ensures that,
    \begin{align*}
        \mathrm{rank}~L_s - \mathrm{rank}~L_1 \ge \frac{2}{\frac{1}{2}\rho d + 3}Q_2^0(1) - Q_1^0(1). 
    \end{align*}
    Writing the expression on the right hand side explicitly, in simplified form, we have,
    \begin{align*}
        \mathrm{rank}~L_s - \mathrm{rank}~L_1 \ge 
        \frac{1}{2}\left(\frac{d (\rho - 1)}{d(d+2)}\right)
        \left(\rho^2 d^2-2\rho d^2-2 d-4\right)
    \end{align*}
    Whenever the right hand side of the inequality is greater than zero, we have confirmed by Lemma \ref{ranksRational} that $\mathrm{rank}~L_s \ne \mathrm{rank}~L_1$ and therefore that the angle set is rational.
    We define, 
    \begin{align*}
        f_d(\rho) = \rho^2 d^2-2\rho d^2-2 d-4.
    \end{align*}
    If $f_d(\rho) > 0$ then the corresponding tight projective $(2s-1)$-design has a rational angle set.

    
    \subsubsection*{Real Projective Case}
    Set $d = 1$ and $\rho > 2$. Then we have,
    \begin{align*}
        f_1(\rho) = \rho^2 -2\rho -6.
    \end{align*}
    The only integer value of $\rho > 2$ with $f_1(\rho) \le 0$ is $\rho = 3$.   
    This means that the real projective case of $\rho = 3$ and $d=1$ is a possible case for $\mathrm{rank}~L_s = \mathrm{rank}~L_1$. We must examine this possibility more closely.
    
    Let $d = 1$ and $\rho =3$. If $s = 2$, for a tight $3$-design, then we know that $\mathrm{rank}~L_s \ne \mathrm{rank}~L_1$ by the fact that $f_1(3) \ne 0$, as shown above. We need to also ensure $\mathrm{rank}~L_s \ne \mathrm{rank}~L_1$ for $s>2$. To do so, we repeat the argument given above except instead we use,
    \begin{align*}
        \mathrm{rank}~L_s\ge \frac{3}{\frac{1}{2}\rho d + 5}Q_3^0(1). 
    \end{align*}
    For $d = 1$ and $\rho = 3$ we have,
    \begin{align*}
        \mathrm{rank}~L_s  - \mathrm{rank}~L_1 \ge \frac{6}{13} Q_3^0(1) - Q_1^0(1) = 1.
    \end{align*}
    Therefore all real tight strictly projective designs have rational angle set.
    
    \subsubsection*{Complex Projective Cases} Let $\rho \ge 3$ and $d = 2$. Then the second factor in the inequality above simplifies to,
    \begin{align*}
        f_2(\rho) = 4\rho^2 - 8\rho -8.
    \end{align*}
    All integer values of $\rho > 2$ satisfy $f_2(\rho) > 0$.
    This means that $\mathrm{rank}~L_s > \mathrm{rank}~L_1$ for all of the complex projective cases. 
    
    \subsubsection*{Quaternion Projective Cases} Let $\rho \ge 3$ and $d = 4$. Then the second factor in the inequality above simplifies to,
    \begin{align*}
        f_4(\rho) = 16\rho^2 - 32 \rho -12.
    \end{align*}
    All integer values of $\rho > 2$ satisfy $f_4(\rho) > 0$.
    This means that $\mathrm{rank}~L_s > \mathrm{rank}~L_1$ for all of the quaternion projective cases. 
    
    \subsubsection*{Exceptional Octonion Projective Case} Let $\rho = 3$ and $d = 8$. Then the second factor in the inequality above simplifies to,
    \begin{align*}
        f_8(3) = 172.
    \end{align*}
    This means that $\mathrm{rank}~L_s > \mathrm{rank}~L_1$ for the exceptional case.
    Having checked all the cases, we have distinct ranks for all idempotent basis matrices $L_i$ and by Lemma \ref{ranksRational} the angle sets of a strictly projective tight $(2s-1)$-design must be rational.
\end{proof}

\section{Conclusion}

We have extended the result of \cite{hoggar_tight_1984} for $d = 1,2,4,8$, which is corrected by \cite{lyubich_tight_2009} for $d = 1,2,4$, to the full range of possible values of rank $\rho$ and degree $d$, proving Theorem \ref{maintheorem}.
This clarifies the full conditions under which a tight $t$-design---whether spherical or projective---has rational angle set. The only examples of irrational angles sets exist on the unit circle $\Omega_2 \cong \mathbb{RP}^{1}$ for $t \ne 2,3,4,5$ and on the unit sphere $\Omega_3\cong \mathbb{CP}^1$ for $t = 5$. 

\bibliographystyle{amsalpha}
\bibliography{references}

\appendix

\end{document}